\def\diag{{\mbox{\rm diag}}}
\def\spec{{\mbox{\rm sp\,}}}
\def\suma{{\mbox{\rm sum}}}
\def\Mat{{\mbox{\rm Mat}}}
\def\span{{\mbox{\rm span}}}
\def\trace{{\mbox{\rm tr}}}
\def\gzit#1{{\rm (\ref{#1})}}
\def\0{{\mbox{\boldmath{$0$}}}}
\def\j{\pmb j}
\newcommand{\R}{{\mathbb R}}
\newcommand{\qed}{\hfill\hbox{\rule{3pt}{6pt}}}
\newenvironment{proof}{\noindent{\sc Proof. }}{\nopagebreak\hspace*{0.5cm}\hfill$\qed$\vspace{0.5cm}}
\newcommand{\G}{\Gamma}
\newtheorem{theorem}{Theorem}[section]
\newtheorem{lemma}[theorem]{Lemma}
\newtheorem{corollary}[theorem]{Corollary}
\newtheorem{proposition}[theorem]{Proposition}
\newtheorem{researchProblem}[theorem]{Research problem}
\title{On a version of the spectral excess theorem\footnote{This research has been partially supported by 
		AGAUR from the Catalan Government under project 2017SGR1087 and by MICINN from the Spanish Government under project PGC2018-095471-B-I00. The second author acknowledges the financial support from the Slovenian Research Agency (research core funding No. P1-0285 and Young Researchers Grant).}}
\author{M. A. Fiol\\
	{\small Departament de Matem\`atiques} \\
   	{\small     Universitat Polit\' ecnica de Catalunya}\\
    	{\small    Barcelona Graduate School of Mathematics} \\
      	{\small  Catalonia, Spain} \\
       	{\small {\tt miguel.angel.fiol@upc.edu}} \and
Safet Penji\'c\\
	{\small Andrej Maru\v{s}i\v{c} Institute}\\
      	{\small  University of Primorska} \\
     	{\small  Muzejski trg 2 }\\
    	{\small    6000 Koper, Slovenia} \\
     	{\small   {\tt safet.penjic@iam.upr.si}}}
\begin{document}

\maketitle

\begin{abstract}
Given a regular (connected) graph $\G=(X,E)$ with adjacency matrix $A$, $d+1$ distinct eigenvalues, and diameter $D$, we give a characterization  of when its distance matrix $A_D$ is a polynomial in $A$, in terms of the adjacency spectrum of $\G$ and the arithmetic (or harmonic) mean of the numbers of vertices at distance $\le D-1$ of every vertex.
The same results is proved for any graph by using its Laplacian matrix $L$ and corresponding spectrum.
When $D=d$ we reobtain the spectral excess theorem characterizing distance-regular graphs.
\end{abstract}


\smallskip
{\small
\noindent
{\it{MSC:}} 05C50, 05E30


\smallskip
\noindent
{\it{Keywords:}} Graph, adjacency algebra, spectrum, harmonic mean, distance-regular graph, Laplacian.
}

\section{Preliminaries}
\label{prelim}
Let $\G=(X,E)$ be a (simple and connected) graph on $n=|X|$ vertices, with adjacency matrix $A$, and spectrum $\spec \G=\{\lambda_0^{m(\lambda_0)},\lambda_1^{m(\lambda_1)},\ldots,\lambda_d^{m(\lambda_d)}\}$, where $\lambda_0>\lambda_1>\cdots>\lambda_d$ are the distinct eigenvalues, and the superscripts stand for their multiplicities $m_i=m(\lambda_i)$. If $\G$ has diameter $D$, we denote by $\G_i(x)$ the set of vertices at distance $i=1,\ldots,D$ from $x\in X$, and $k_i(x)=|\G_i(x)|$. We abbreviate $k_1(x)$ by $k(x)$, the degree of vertex $x$. 

Given to square matrices $M,N\in \Mat_{n\times n}(\R)$, let $\suma(M)$ denote the sum of all entries of $M$, so that $\trace(MN)=\suma(M\circ N)$, where `$\circ$' stand for the Hadamard (or entrywise) product.
The {\em predistance polynomials} $p_0,p_1,\ldots,p_d$ of $\G$, introduced in \cite{fg97}, are a sequence of orthogonal polynomials with respect to the scalar product
$$
\langle f,g\rangle_{A}=\frac{1}{n}\trace(f(A)g(A))=\frac{1}{n}\suma(f(A)\circ g(A))=\frac{1}{n}\sum_{i=0}^d m(\lambda_i)f(\lambda_i)g(\lambda_i),
$$
normalized in such a way that $\|p_i\|^2_{A}=p_i(\lambda_0)$. For instance, since $\trace (A^h)=n\langle A^h,I \rangle_A=\sum_{i=0}^d m(\lambda_i)\lambda_i^h$, the two first predistance polynomials are $p_0(x)=1$ and $p_1(x)=\frac{\lambda_0}{\overline{k}}x$, with $\overline{k}$ being the average degree of $\G$, see also Lemma \ref{r1}. (It is known that $\overline{k}\le \lambda_0$ with equality if and only if $\G$ is regular.)
Moreover, the value of the highest degree polynomial $p_d$ at $\lambda_0$ can be computed from $\spec \G$ as 
\begin{equation}
\label{pd(lambda0)}
p_d(\lambda_0)=n\left(\sum_{i=0}^d \frac{\phi_0^2}{m_i\phi_i^2}\right)^{-1},
\end{equation}
where $\phi_i =\prod_{j\neq i}(\lambda_i-\lambda_j)$, $i=0,\ldots,d$ (see \cite{fg97}).

The {\em predistance matrices} $P_0,P_1,\ldots,P_d$ are then defined by $P_i=p_i(A)$ for $i=0,1,\ldots,d$. By  \cite[Prop. 2.2]{cffg09}, there exist numbers $\alpha_i$, $\beta_i$, and $\gamma_i$  such that $PP_i(=P_iP)=\beta_{i-1}P_{i-1}+\alpha_iP_i+\gamma_{i+1}P_{i+1}$ for $i=0,\ldots,d$, where $P=P_1$, $P_{-1}=P_{d+1}=0$, $\gamma_1=1$, and $\alpha_0=0$. Also, if we define the polynomial $p_{d+1}(x)=(x-\alpha_d)p_d(x)-\beta_{d-1}p_{d-1}(x)$, it can be shown that $p_{d+1}(A)=0$, and the distinct eigenvalues of $\G$ are precisely the zeros of $p_{d+1}$.

The above names come from the fact that, if $\G$ is a distance-regular graph, then the $p_i$'s and $P_i$'s correspond to the well-known distance polynomials and distance matrices $A_i$, respectively. In fact, a known characterization states that $\G$ is distance-regular if and only if such polynomials satisfy $p_i(A)=A_i$ for every $i=1,\ldots,D$. Moreover, in this case, $D=d$.
If we do not impose that the degree of each polynomial coincide with its subindex, then it can be $D<d$ and the graph is called {\em distance-polynomial}, a concept introduced by Weichsel \cite{we82}.

In fact, if $D=d$, the first author, Garriga, and Yebra \cite{fgy96} proved the following.
\begin{proposition}
\label{charac-1}
A regular graph $\G$ with diameter $D$ and $d+1$ distinct eigenvalues is distance-regular if and only if $D=d$ and its highest degree predistance polynomial satisfies $p_d(A)=A_d$.
\end{proposition}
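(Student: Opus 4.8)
The plan is to prove the two implications separately; all the content is in the \emph{if} direction. For \emph{only if}, if $\G$ is distance-regular then, by the characterization recalled just above, $D=d$ and the predistance polynomials are exactly the distance polynomials, so in particular $p_d(A)=A_d$, which is everything that is claimed.

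For the \emph{if} direction, assume $D=d$ and $p_d(A)=A_d$, and write $P_i=p_i(A)$. I would first isolate two standard facts. First, since $\deg p_i=i$, each $P_i$ is ``distance-lower-triangular'': $(P_i)_{xy}=0$ whenever $\mathrm{dist}(x,y)>i$ (because $(A^h)_{xy}=0$ for $h<\mathrm{dist}(x,y)$). Second, $\sum_{i=0}^d p_i$ equals the Hoffman polynomial $H$ of $\G$, namely the polynomial of degree $d$ with $H(\lambda_0)=n$ and $H(\lambda_i)=0$ for $i\ge 1$: indeed, using the normalization $\|p_i\|_A^2=p_i(\lambda_0)$, orthogonality of the $p_i$, and $m_0=1$, one gets $\langle\sum_i p_i,p_j\rangle_A=p_j(\lambda_0)=\langle H,p_j\rangle_A$ for every $j$, and since $p_0,\dots,p_d$ is a basis of the polynomials of degree $\le d$ (on which $\langle\cdot,\cdot\rangle_A$ is nondegenerate), the two polynomials coincide. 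As $\G$ is regular and connected, $H(A)=nE_0=J$, so $\sum_{i=0}^d P_i=J$.

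The heart of the argument is a downward induction on $i$ showing that $P_i$ is supported within the pairs at distance $i$, i.e.\ $(P_i)_{xy}=0$ unless $\mathrm{dist}(x,y)=i$. The base case $i=d$ is the hypothesis $P_d=A_d$ (together with $P_{d+1}=0$). For the inductive step, use the three-term recurrence $AP_j=\beta_{j-1}P_{j-1}+\alpha_jP_j+\gamma_{j+1}P_{j+1}$ (here $P=P_1=A$ since $\G$ is regular) to solve $P_{j-1}=\beta_{j-1}^{-1}\bigl(AP_j-\alpha_jP_j-\gamma_{j+1}P_{j+1}\bigr)$: by the inductive hypothesis the right-hand side is supported on pairs at distance in $\{j-1,j,j+1\}$ (multiplication by $A$ enlarges the distance-support by at most one on each side), while $P_{j-1}$ is distance-lower-triangular of width $j-1$; the two constraints force the support of $P_{j-1}$ to lie exactly at distance $j-1$. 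This step uses $\beta_{j-1}\ne 0$, which holds because $\beta_{j-1}=\gamma_j\,p_j(\lambda_0)/p_{j-1}(\lambda_0)>0$ by the standard positivity of the recurrence coefficients of orthogonal polynomials with respect to a positive measure. Once every $P_i$ has distance-$i$ support, the identity $\sum_i P_i=J$ upgrades this: for any pair $(x,y)$ at distance $t\le d$, all terms but $i=t$ vanish in $\sum_i(P_i)_{xy}=J_{xy}=1$, so $(P_t)_{xy}=1$; hence $P_t=A_t$ for $t=0,\dots,d$, and $\G$ is distance-regular by the characterization quoted in the excerpt. I expect the only delicate point to be the support-propagation in the inductive step — tracking carefully how $A$ spreads distance-support and checking that no $\beta_{j-1}$ vanishes — with everything else being routine.
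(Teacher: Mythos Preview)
The paper does not supply its own proof of this proposition: it is stated as a known result of Fiol, Garriga, and Yebra \cite{fgy96} and used as input for Corollary~\ref{1g}$(iv)$. So there is nothing in the paper to compare your argument against.

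That said, your proof is correct and follows the standard route. The downward induction on the distance-support of $P_i$ via the three-term recurrence, combined with the Hoffman identity $\sum_{i=0}^d P_i=J$ (valid because $\G$ is regular and connected), is exactly how this is usually done. Two minor remarks: (1) you only need $\beta_{j-1}\ne 0$, not positivity, and this already follows from your identity $\beta_{j-1}p_{j-1}(\lambda_0)=\gamma_j p_j(\lambda_0)$ since $\gamma_j$ is a nonzero ratio of leading coefficients and $p_i(\lambda_0)=\|p_i\|_A^2>0$; (2) in the support-propagation step, the intersection of the two constraints ``support in $\{j-1,j,j+1\}$'' (from the right-hand side) and ``support in $\{0,\dots,j-1\}$'' (from $\deg p_{j-1}=j-1$) is exactly $\{j-1\}$, so the inductive step goes through cleanly. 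Nothing is missing.
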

  
From the predistance polynomials, we also consider their sums $q_i=p_0+\cdots +p_i$ for $i=0,\ldots,d$, which satisfy
$1=q_0(\lambda_0)<q_1(\lambda_0)<\cdots <q_d(\lambda_0)=|X|$, with $q_d=H$ being the Hoffman polynomial that characterizes the regularity of $\G$ by the equality $H(A)=J$, the all-1 matrix (see \cite{AJH}).

We also recall that the Laplacian matrix of $\G$ is the matrix $L=K-A$, where $K=\diag(k(x_1),\ldots,k(x_n))$, where $x_i\in X$ for $i=1,\ldots,n$. The Laplacian spectrum of $\G$ is $\spec_{\!L} \G=\spec L$  $=\{\theta_0^{m(\theta_0)},\theta_1^{m(\theta_1)},\ldots,\theta_d^{m(\theta_d)}\}$ with $\theta_0=0<\theta_1<\cdots <\theta_d$. In particular, since $\G$ is connected, $m_0=1$, and the eigenvalue $0$ has eigenvector $\j$, the all-1 vector.
As in the case of the adjacency spectrum, we can define the {\em Laplacian predistance polynomials} $r_0,r_1,\ldots,r_d$ as the sequence of orthogonal polynomials with respect to the scalar product
$$
\langle f,g\rangle_{L}=\frac{1}{n}\trace(f(L)g(L))=\frac{1}{n}\suma(f(L)\circ g(L))=\frac{1}{n}\sum_{i=0}^d m(\theta_i) f(\theta_i)g(\theta_i),
$$
normalized in such a way that $\|r_i\|_L^2=r_i(0)$.
The following result gives
the first two Laplacian predistance polynomials.
\begin{lemma} 
	\label{r1}
	Let $\G$ be a graph with Laplacian matrix $L=K-A$.
	Let  $\overline{k^2}$ be the  average of the square degrees of $\G$.
	Then
	\begin{itemize}
		\item[$(i)$] 
		$r_0(x)=1$.
		\item[$(ii)$]
		$r_1(x)=\frac{\overline{k}}{\overline{k}(\overline{k}-1)-\overline{k^2}}(x-\overline{k})$. 
	\end{itemize}
	\end{lemma}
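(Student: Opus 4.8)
The plan is to obtain $r_0$ and $r_1$ by the first two steps of Gram--Schmidt applied to $1,x$ with respect to $\langle\cdot,\cdot\rangle_L$, imposing the normalization $\|r_i\|_L^2=r_i(0)$, exactly as is done for the adjacency predistance polynomials. For $(i)$, write $r_0(x)=c$. Then $\|r_0\|_L^2=\frac1n\trace(c^2I)=c^2$ while $r_0(0)=c$, so the normalization forces $c^2=c$, and since $r_0$ must be a nonzero polynomial we get $c=1$.

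For $(ii)$, write $r_1(x)=ax+b$ with $a\neq 0$. Orthogonality to $r_0=1$ reads $0=\langle r_1,1\rangle_L=\frac1n\trace(aL+bI)=\frac{a}{n}\trace L+b$, and since $\trace L=\sum_{x\in X}k(x)=n\,\overline{k}$ this gives $b=-a\,\overline{k}$; hence $r_1(x)=a(x-\overline{k})$ and $r_1(0)=-a\,\overline{k}$. To pin down $a$ I would use $\|r_1\|_L^2=r_1(0)$, i.e.
$$
\frac{a^2}{n}\trace\!\big((L-\overline{k}I)^2\big)=\frac{a^2}{n}\big(\trace L^2-2\,\overline{k}\,\trace L+n\,\overline{k}^2\big)=-a\,\overline{k}.
$$
The only computation requiring a little care is $\trace L^2$, which I would evaluate via the Hadamard-product identity recalled in the Preliminaries: $\trace L^2=\suma(L\circ L)=\sum_{x,y}(L)_{xy}^2=\sum_{x\in X}k(x)^2+2|E|=n\,\overline{k^2}+n\,\overline{k}$, using $(L)_{xx}=k(x)$, $(L)_{xy}=-1$ for $x\sim y$, and $\sum_x k(x)=2|E|=n\,\overline{k}$. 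Substituting, the displayed equation becomes $a^2(\overline{k^2}+\overline{k}-\overline{k}^2)=-a\,\overline{k}$, and cancelling the nonzero factor $a$ yields $a=\dfrac{-\overline{k}}{\overline{k^2}+\overline{k}-\overline{k}^2}=\dfrac{\overline{k}}{\overline{k}(\overline{k}-1)-\overline{k^2}}$, which is the claimed coefficient.

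I do not expect a genuine obstacle: the argument is entirely a two-term orthogonalization, and the one slightly nontrivial point is expressing $\trace L$ and $\trace L^2$ through the degree sequence, which is immediate from $\trace(MN)=\suma(M\circ N)$ and the explicit entries of $L=K-A$. (Note the parallel with the adjacency case, where the same recipe gives $p_1(x)=\tfrac{\lambda_0}{\overline{k}}x$.)
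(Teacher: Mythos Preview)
Your proposal is correct and follows essentially the same approach as the paper: Gram--Schmidt on $1,x$ with respect to $\langle\cdot,\cdot\rangle_L$, followed by the normalization $\|r_i\|_L^2=r_i(0)$. The only cosmetic difference is in the computation of $\|t\|_L^2$: the paper expands $\trace\big((K-A-\overline{k}I)^2\big)$ term by term using $\trace K^2$, $\trace A^2$, $\trace(KA)=0$, etc., whereas you compute $\trace L^2$ directly from the entries of $L$ via $\trace L^2=\suma(L\circ L)$; both routes give $\overline{k^2}+\overline{k}-(\overline{k})^2$ and the rest is identical.
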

\begin{proof}
	We only need to prove $(ii)$. By using the method of Gram-Schmidt, 
	we first find a polynomial $t(x)$ orthogonal to $r_0=1$. That is,
	$t(x)=x-\frac{\langle x, r_0\rangle_L}{\|r_0\|_L^2}r_0(x)$, where
$$
\langle x, r_0\rangle_L =\frac{1}{n}\trace(L)=\frac{1}{n}\sum_{x\in X} k(x)=\overline{k} \qquad \mbox{and} \qquad \|r_0\|_L^2=\frac{1}{n}\trace(I^2)=1.
$$
Now, $r_1(x)=\alpha t(x)$, where $\alpha$ is a constant to be determined by the normalization condition $\|r_1\|_L=r_1(0)$, which gives $\alpha=\frac{t(0)}{\|t\|_L^2}$. Moreover,
\begin{eqnarray*}
\|t\|_L^2 & = &\|x-\overline{k}\|_L^2 = \frac{1}{n}\trace([K-A-\overline{k}I]^2)
=\frac{1}{n}\trace(K^2+A^2+(\overline{k})^2I-2KA-2\overline{k}K+2\overline{k}A)\\
 & = & \frac{1}{n}\sum_{x\in X}k(x)^2+\frac{1}{n}\sum_{x\in X}k(x)
+ (\overline{k})^2-2\overline{k}\frac{1}{n}\sum_{x\in X}k(x)=\overline{k^2}+\overline{k}+(\overline{k})^2-2(\overline{k})^2\\
 & = & \overline{k^2}+\overline{k}-(\overline{k})^2.
\end{eqnarray*}
Then, from $r_1(x)=\frac{t(0)}{\|t\|^2_L}t(x)$ we get the result. 
\end{proof}

Also, as in the case of the predistance polynomials $p_i$'s, we have
\begin{equation}
\label{rd(0)}
r_d(0)=n\left(\sum_{i=0}^d \frac{\psi_0^2}{m(\theta_i)\psi_i^2}\right)^{-1},
\end{equation}
with $\psi_i =\prod_{j\neq i}(\theta_i-\theta_j)$, $i=0,\ldots,d$ (see \cite{cffg09}).

The analogous of Proposition \ref{charac-1}, for not necessarily regular graphs, was proved by Van Dam and the first author in \cite{vf14}.

\begin{proposition}
	\label{charac-2}
	A graph $\G$ with Laplacian matrix $L$, $d+1$ distinct Laplacian eigenvalues, and  diameter $D$ is distance-regular if and only if $D=d$ and its highest degree Laplacian predistance polynomial satisfies $r_d(L)=A_d$.
\end{proposition}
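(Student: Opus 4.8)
The plan is to prove the two implications separately, working with the distance matrices, the Laplacian predistance polynomials $r_i$, and their three-term recurrence. For necessity, suppose $\G$ is distance-regular, say of degree $k$. Then $D=d$ (a standard fact), $\G$ is regular, $L=kI-A$, and the Laplacian eigenvalues are $\theta_i=k-\lambda_i$ with the same multiplicities. Writing the distance matrix as $A_i=v_i(A)$ for the distance polynomial $v_i$, put $w_i(x):=v_i(k-x)$, so that $A_i=w_i(L)$ and $\deg w_i=\deg v_i=i$. I would check that $w_0,\dots,w_d$ \emph{is} the sequence of Laplacian predistance polynomials: they are orthogonal for $\langle\cdot,\cdot\rangle_L$, because for $i\ne j$ the distance matrices have disjoint support so $\langle w_i,w_j\rangle_L=\frac1n\suma(A_i\circ A_j)=0$; and they have the right norm, since $\|w_i\|_L^2=\frac1n\suma(A_i)=k_i$ while $w_i(0)=v_i(k)=k_i$, the last equality because $A_i\j=k_i\j$. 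By uniqueness of the orthogonal polynomial sequence with the normalization $\|r_i\|_L^2=r_i(0)$, we get $r_i=w_i$ for all $i$; in particular $r_d(L)=w_d(L)=A_d$.

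For sufficiency, assume $D=d$ and $r_d(L)=A_d$. I would combine two ingredients. The first is the Laplacian analogue of the Hoffman polynomial: the polynomial $s:=r_0+\cdots+r_d$ of degree $d$ satisfies $s(L)=J$. Indeed, since $\|r_i\|_L^2=r_i(0)$, one has $s(x)=\sum_i \frac{r_i(0)}{\|r_i\|_L^2}r_i(x)$, which is precisely the element of $\RR_d[x]$ representing evaluation at $0$ for the scalar product $\langle\cdot,\cdot\rangle_L$; as $\G$ is connected, $m(\theta_0)=1$, and this representer takes the value $n/m(\theta_0)=n$ at $\theta_0=0$ and the value $0$ at every $\theta_i$ with $i\ge 1$; hence $s(L)$ acts as $n$ on the $0$-eigenspace spanned by $\j$ and as $0$ on its orthogonal complement, that is, $s(L)=J$. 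The second ingredient is a support-propagation statement: for each $j$, the matrix $r_j(L)$ is supported on the pairs of vertices at distance \emph{exactly} $j$. I would prove this by downward induction on $j$, the base case $j=d$ being the hypothesis $r_d(L)=A_d$ (together with $r_{d+1}(L)=0$). For the step, the three-term recurrence for the $r_i$'s yields $\beta_j\,r_j(L)=L\,r_{j+1}(L)-\alpha_{j+1}r_{j+1}(L)-\gamma_{j+2}r_{j+2}(L)$ with $\beta_j\ne 0$; by the induction hypothesis and the fact that multiplying by $L$ shifts the distance of a nonzero entry by at most $1$, the right-hand side has nonzero entries only at pairs at distance $j$, $j+1$, or $j+2$, whereas the left-hand side, being a polynomial of degree at most $j$ in $L$, vanishes at all pairs at distance greater than $j$ (because $(L^m)_{xy}=0$ whenever $d(x,y)>m$). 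Comparing, $r_j(L)$ can only be nonzero at pairs at distance exactly $j$.

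Putting the two ingredients together: for vertices $x,y$ at distance $k$ we get $1=J_{xy}=\sum_{j=0}^d(r_j(L))_{xy}=(r_k(L))_{xy}$, so $r_j(L)=A_j$ for all $j=0,\dots,d$. Consequently $\span\{I,L,\dots,L^d\}=\span\{A_0,\dots,A_d\}$, and since the left-hand side is an algebra we may write $A_1A_j=\sum_k p_{1j}^kA_k$; comparing $(x,y)$ entries shows $|\{z\sim x:d(z,y)=j\}|=p_{1j}^{d(x,y)}$, which depends only on $d(x,y)$, so $\G$ is distance-regular.

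The main obstacle, I expect, is the sufficiency direction, and within it the support-propagation step: everything rests on $r_j(L)$ being supported on pairs at distance \emph{exactly} $j$ (so that $s(L)=J$ decomposes trivially on each distance class), and making the induction work requires both enough information at the top ($r_d(L)=A_d$ and $r_{d+1}(L)=0$) and the non-vanishing of the coefficient $\beta_j$; setting up $s(L)=J$ itself also needs some care, since in the Laplacian setting it holds for all connected graphs rather than only the regular ones.
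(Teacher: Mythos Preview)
The paper does not give its own proof of this proposition: it is quoted verbatim as a known result of Van Dam and Fiol \cite{vf14}, so there is no argument in the paper to compare against. Your proposal is a correct, self-contained proof, and it uses exactly the two tools the paper has set up in the surrounding text: the Laplacian Hoffman-like identity $s_d(L)=J$ (stated just before Lemma~\ref{1e}) and the three-term recurrence for the $r_i$'s (the Laplacian analogue of the recurrence displayed for the $P_i$'s). Your downward support-propagation argument is sound: the key points that $(L^m)_{xy}=0$ whenever $d(x,y)>m$ (because $L=K-A$ has nonzero entries only at pairs at distance $\le 1$) and that the lowering coefficient $\beta_j$ in $x\,r_{j+1}=\beta_j r_j+\alpha_{j+1}r_{j+1}+\gamma_{j+2}r_{j+2}$ is nonzero (a standard fact for orthogonal polynomials with respect to a positive measure, since $\langle x r_{j+1},r_j\rangle_L=\langle r_{j+1},x r_j\rangle_L\ne 0$) both hold as you use them. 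One small addition worth making explicit in the sufficiency direction: once you obtain $r_1(L)=A_1=A$, Lemma~\ref{r1-reg} of the paper immediately gives that $\G$ is regular, after which your algebra argument $A A_j=\sum_k p_{1j}^k A_k$ yields distance-regularity. Your necessity argument via $w_i(x)=v_i(k-x)$ is also correct; the uniqueness you invoke is genuine because the normalization $\|r_i\|_L^2=r_i(0)$ fixes the scalar in each degree uniquely (and positively, since $w_i(0)=v_i(k)=k_i>0$).
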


In fact, the regularity of $\G$ is already implied by the equation $r_1(L)=A$, as shown in the following lemma.

\begin{lemma}
\label{r1-reg}
Let $\G$ be a graph with adjacency and Laplacian matrices $A$ and $L$, respectively, and Laplacian predistance polynomial $r_1$.
Then, $\G$ is $k$-regular if and only if $r_1(L)=A$.
\end{lemma}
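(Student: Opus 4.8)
The plan is to substitute the explicit formula for $r_1$ from Lemma~\ref{r1} into the equation $r_1(L)=A$ and then extract its content by splitting $\Mat_{n\times n}(\R)$ into diagonal matrices and matrices with zero diagonal. Write $r_1(x)=c\,(x-\overline{k})$ with $c=\overline{k}\big/\big(\overline{k}(\overline{k}-1)-\overline{k^2}\big)$. Since by Cauchy--Schwarz $\overline{k^2}\ge(\overline{k})^2$, the denominator equals $-\big(\overline{k^2}-(\overline{k})^2+\overline{k}\big)\le -\overline{k}<0$ (using $\overline{k}>0$, as $\G$ is connected with at least one edge; the trivial one-vertex case is vacuous since then $r_1$ does not exist), so $c$ is a well-defined nonzero scalar and $r_1$ genuinely has degree $1$. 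Using $L=K-A$, the identity $r_1(L)=A$ then rearranges to
\[
c\,(K-\overline{k}I)=(1+c)A .
\]

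For the forward direction I would simply specialize: if $\G$ is $k$-regular, then $\overline{k}=k$ and $\overline{k^2}=k^2$, hence $c=k/(k(k-1)-k^2)=-1$ and $r_1(x)=k-x$; since $L=kI-A$ in the regular case, $r_1(L)=kI-L=A$.

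For the converse, assume $r_1(L)=A$, i.e.\ $c\,(K-\overline{k}I)=(1+c)A$. The left-hand side is a diagonal matrix, while $A$ has zero diagonal because $\G$ is simple; evaluating an off-diagonal entry corresponding to an edge of $\G$ (one exists, as $\G$ is connected on $\ge 2$ vertices) gives $0=(1+c)\cdot 1$, so $c=-1$. Feeding $c=-1$ back into the equation and comparing diagonal entries yields $K-\overline{k}I=\O$, i.e.\ every vertex has degree $\overline{k}$, so $\G$ is $\overline{k}$-regular.

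There is no genuinely hard step here; the only points requiring care are checking that the normalizing constant $c$ is nonzero, so that $r_1(L)$ really involves $L$ nontrivially, and organizing the whole argument around the diagonal/off-diagonal splitting of the matrix equation, which is what makes the non-regular alternative collapse at once.
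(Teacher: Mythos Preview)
Your proof is correct and follows essentially the same route as the paper: both substitute the explicit formula $r_1(x)=c(x-\overline{k})$ from Lemma~\ref{r1} into $r_1(L)=A$ and extract $c=-1$ from the $A$-part (your diagonal/off-diagonal split just makes this more explicit than the paper's ``equating the coefficients of $A$''). The only cosmetic difference is the last step of the converse: the paper solves $c=-1$ for $\overline{k^2}=(\overline{k})^2$ and invokes the equality case of Cauchy--Schwarz, while you feed $c=-1$ back into the diagonal part to obtain $K=\overline{k}I$ directly.
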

\begin{proof}
From the Cauchy-Schwartz inequality,
$\overline{k^2}\ge (\overline{k})^2$, with equality if and only if $\G$ is $k$-regular. In this case, Lemma \ref{r1}$(ii)$ becomes $r_1(x)=k-x$ and, hence, $r_1(L)=kI-(kI-A)=A$. Conversely, if $r_1(L)=r_1(K-A)=A$, by equating the coefficients of $A$ in  \eqref{r1}, we get $\frac{\overline{k}}{\overline{k}(\overline{k}-1)-\overline{k^2}}=-1$, whence $\overline{k^2}=(\overline{k})^2$, $\G$ is $k$-regular, and $K=kI$.
\end{proof}

In this context, we also consider the sum polynomials $s_i=r_0+\cdots +r_i$ for $i=0,\ldots,d$, with $H_L=s_d$ being a Hoffman-like polynomial satisfying $H(L)=J$ (independently of whether $\G$ is regular or not). For more details, see \cite{vf14}.

In our results we use the following simple result.

\begin{lemma}
\label{1e}
Let $\G=(X,E)$ be a graph with adjacency matrix $A$ and Laplacian matrix $L$. Given a vertex $x\in X$ and a polynomial $p\in\R_h[t]$,
\begin{itemize}
\item [$(i)$]
If $\G$ is $k$-regular, then $\sum_{y\in X} p(A)_{xy}=p(k)$.
\item[$(ii)$]
If $\G$ is a general graph, then $\sum_{y\in X} p(L)_{xy}=p(0)$.
\end{itemize}  
\end{lemma}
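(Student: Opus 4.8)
The plan is to prove both parts by the same mechanism: reduce the statement about an arbitrary polynomial $p$ to the case of monomials $p(t)=t^h$ by linearity, and then exploit a vector that is simultaneously an eigenvector of the relevant matrix and whose entries encode the row sums. For part $(i)$, when $\G$ is $k$-regular, the all-1 vector $\j$ satisfies $A\j=k\j$, hence $A^h\j=k^h\j$ for every $h\ge 0$, and therefore $p(A)\j=p(k)\j$ by linearity. Reading off the $x$-th coordinate of this identity gives $\sum_{y\in X}p(A)_{xy}=(p(A)\j)_x=p(k)$, as claimed. For part $(ii)$, the key observation is that $L\j=\0$ always holds (each row of $L=K-A$ sums to $k(x)-k(x)=0$), regardless of regularity; thus $L^h\j=\0$ for $h\ge 1$ while $L^0\j=I\j=\j$, and by linearity $p(L)\j=p(0)\j$. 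Taking the $x$-th coordinate yields $\sum_{y\in X}p(L)_{xy}=p(0)$.

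Concretely, I would write $p(t)=\sum_{h=0}^{\deg p}c_h t^h$ and compute $\sum_{y\in X}p(M)_{xy}=\sum_{y\in X}\sum_h c_h (M^h)_{xy}=\sum_h c_h (M^h\j)_x$, then substitute the eigenvector relation in each case. The only point deserving a sentence of justification is that $\j$ is a (left and right) eigenvector: for $(i)$ this is exactly the definition of $k$-regularity, and for $(ii)$ it is the elementary identity $L\j=K\j-A\j$, whose $x$-th entry is $k(x)-\sum_{y\sim x}1=k(x)-k(x)=0$. Both matrices are symmetric, so there is no distinction between row sums and column sums, and the statement as phrased (summing over the second index) is immediate.

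There is essentially no obstacle here; the lemma is purely bookkeeping. The one thing to be careful about is the $h=0$ term in part $(ii)$: since $L^0=I$ rather than $0$, the constant term of $p$ survives and produces exactly the $p(0)$ on the right-hand side, while all positive powers of $L$ annihilate $\j$. Writing the computation in the monomial basis makes this transparent and avoids any sign or indexing slips.
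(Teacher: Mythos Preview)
Your proof is correct and follows exactly the same approach as the paper: both observe that $\j$ is an eigenvector of $A$ (with eigenvalue $k$, by regularity) and of $L$ (with eigenvalue $0$, always), deduce $p(A)\j=p(k)\j$ and $p(L)\j=p(0)\j$, and read off the $x$-th coordinate. Your extra detail on the monomial expansion and the $h=0$ term is sound but more explicit than the paper's two-line argument.
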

\begin{proof}
$(i)$ Since $\G$ is $k$-regular, $(k,\j)$ is an eigenpair of $A$ and, hence,
$p(A)\j=p(k)\j$. Then, the result follows by considering the $x^{\rm th}$ component of both vectors.
Case $(ii)$ is proved in the same way by considering that $(0,\j)$ is an eigenpair of $L$.
\end{proof}

\section{A version of the spectral excess theorem}
\label{ae}
The spectral excess theorem, due to Fiol and  Garriga \cite{fg97}, states that a regular (connected) graph $\G$ is distance-regular if and only if its spectral excess (a number which can be computed from the spectrum of $\G$) equals its average excess (the mean of the numbers of vertices at maximum distance from every vertex), see  Van Dam \cite{vd08}, and Fiol, Gago, Garriga \cite{fgg10} for short proofs.

In this section we find a possible blue solution to the problem of deciding  whether, from the adjacency spectrum of a (regular) graph $\G$ and the  harmonic (or arithmetic) mean of the numbers $(|X|-|\G_D(x)|)_{x\in X}$, we can decide that $A_D$ is a polynomial in $A$.
To be more precise, we provide a characterization of when $A_D\in \span \{p_0(A),\ldots,p_d(A)\}$, where the $p_i's$ are the predistance polynomials.


Before proving the main result, note that,
for any $x\in X$ and any $C\in\Mat_X(\R)$,  the Cauchy-Schwartz inequality yields
$$
\left(
\sum_{y\not\in\G_{D}(x)}(C_{xy})^2
\right)
\left(
\sum_{y\not\in\G_{D}(x)}1^2
\right)
\ge
\left(
\sum_{y\not\in\G_{D}(x)}C_{xy}
\right)^2.
$$
That is,
\begin{equation}
\label{1d}
\sum_{y\not\in\G_{D}(x)}(C_{xy})^2
\ge
\frac{1}{|X|-|\G_D(x)|}
\left(
\sum_{y\not\in\G_{D}(x)}C_{xy}
\right)^2,
\end{equation}
and equality holds if and only if all the values of $C_{xy}$ are the same for all $y\not\in\G_{D}(x)$.

\begin{theorem}
\label{th:1f}
Let $\G=(X,R)$ be a connected $k$-regular graph with $d+1$ distinct eigenvalues, diameter $D$, and predistance polynomials $\{p_i\}_{i=0}^d$. Then,
\begin{equation}
\label{1f}
\frac{|X|}{\sum_{x\in X}\frac{1}{|X|-|\G_D(x)|}}\ge q_{D-1}(k)=|X| - \sum_{i=D}^d p_i(k),
\end{equation}
with equality if and only if
$A_D=\sum_{i=D}^d p_i(A)$.
\end{theorem}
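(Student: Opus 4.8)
The plan is to study the matrix $Q:=\sum_{i=D}^d p_i(A)$. Since $q_d$ is the Hoffman polynomial of the regular graph $\G$, we have $q_d(A)=J$, so $Q=q_d(A)-q_{D-1}(A)=J-q_{D-1}(A)$, and by orthogonality together with the normalization $\|p_i\|_A^2=p_i(\lambda_0)$ we get $\|Q\|_A^2=\sum_{i=D}^d\|p_i\|_A^2=\sum_{i=D}^d p_i(\lambda_0)=\sum_{i=D}^d p_i(k)=q_d(k)-q_{D-1}(k)=|X|-q_{D-1}(k)$. Write $n=|X|$, $q=q_{D-1}(k)$, and $m_x=n-|\G_D(x)|$, the number of vertices at distance at most $D-1$ from $x$ (so $m_x\ge 1$, since $x\notin\G_D(x)$). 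Because $q_{D-1}$ has degree $D-1<D$, we have $(q_{D-1}(A))_{xy}=0$ whenever $d(x,y)=D$; hence $Q$ agrees with the distance matrix $A_D$ on every distance-$D$ entry (both equal $1$), while $Q_{xy}=1-(q_{D-1}(A))_{xy}$ otherwise.

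From Lemma \ref{1e}$(i)$, $\sum_{y\in X}(q_{D-1}(A))_{xy}=q$, and since the distance-$D$ terms vanish this gives $\sum_{y\notin\G_D(x)}(q_{D-1}(A))_{xy}=q$, hence $\sum_{y\notin\G_D(x)}Q_{xy}=m_x-q$. Applying the row-wise Cauchy--Schwarz inequality \eqref{1d} with $C=Q$, and using $Q_{xy}=1$ on $\G_D(x)$ together with $|\G_D(x)|+m_x=n$, one obtains, for every $x$,
\[
\sum_{y\in X}Q_{xy}^2=|\G_D(x)|+\sum_{y\notin\G_D(x)}Q_{xy}^2\ge |\G_D(x)|+\frac{(m_x-q)^2}{m_x}=n-2q+\frac{q^2}{m_x}.
\]
Summing over $x$ and using $\sum_{x,y}Q_{xy}^2=n\|Q\|_A^2=n(n-q)$ yields $n(n-q)\ge n^2-2nq+q^2\sum_x 1/m_x$, that is, $q^2\sum_x 1/m_x\le nq$. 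Since $q=q_{D-1}(k)\ge q_0(k)=1>0$, dividing by $q$ gives $\sum_x 1/m_x\le n/q$, which is exactly \eqref{1f}.

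For the equality statement, note that the only inequality used above is \eqref{1d} applied row by row, so equality in \eqref{1f} holds precisely when, for every $x$, the entries $Q_{xy}$ take a common value $c_x$ over all $y\notin\G_D(x)$; in that case $m_xc_x=m_x-q$. If $A_D=Q$, then $Q_{xy}=(A_D)_{xy}=0$ for every $y\notin\G_D(x)$, so equality holds. Conversely, suppose equality holds. The case $D=1$ is immediate ($\G$ is then complete and $A_1=J-I=Q$), so assume $D\ge 2$. For adjacent vertices $x,y$ we have $d(x,y)=1\le D-1$, so $y\notin\G_D(x)$ and $x\notin\G_D(y)$, and symmetry of $Q$ gives $c_x=Q_{xy}=Q_{yx}=c_y$; since $\G$ is connected, all $c_x$ equal a single constant $c$, whence $Q=cJ+(1-c)A_D$. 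Taking traces and using $\trace(p_i(A))=n\langle p_i,p_0\rangle_A=0$ for $i\ge 1$ gives $cn=\trace(Q)=0$, so $c=0$ and $A_D=Q=\sum_{i=D}^d p_i(A)$.

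I expect the inequality itself to be routine once $Q$ is chosen well, so the main obstacle is the equality analysis: turning the row-constancy of $Q$ outside $\G_D(x)$ into the clean identity $A_D=Q$ requires the symmetry-plus-connectedness argument and the trace/orthogonality relation to eliminate the leftover constant, and one must take care of the degenerate case $D=1$ and check $q_{D-1}(k)>0$ before the final division.
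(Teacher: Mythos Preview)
Your proof is correct and follows essentially the same route as the paper: a row-wise Cauchy--Schwarz inequality applied to a polynomial in $A$ of degree $D-1$, combined with Lemma~\ref{1e}$(i)$ to evaluate the row sums. The only cosmetic difference is that the paper works directly with $q_{D-1}(A)$ while you work with its complement $Q=J-q_{D-1}(A)$; expanding your row bound one sees it is literally the same inequality. For the equality case the paper determines the common entry via the norm identity $\|q_{D-1}\|_A^2=q_{D-1}(k)$, whereas you use $\trace Q=0$ from orthogonality to $p_0$; your treatment of the constancy step (adjacent vertices plus connectedness) and of the degenerate case $D=1$ is in fact a bit more explicit than the paper's, but the arguments are equivalent.
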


\noindent
\begin{proof}
We just adjust the proof of \cite[Lemma 1]{vd08}, together with Lemma \ref{1e} and \gzit{1d}.
Recalling that $q_{D-1}=\sum_{i=0}^{D-1} p_i$, we have that
\begin{eqnarray*}
q_{D-1}(k) & = & \langle q_{D-1},q_{D-1}\rangle_A=\frac{1}{|X|}\trace (q_{D-1}(A)^2)
=\frac{1}{|X|}\sum_{x\in X} (q_{D-1}(A)^2)_{xx}\\
 & = &
\frac{1}{|X|}\sum_{x\in X}\sum_{y\not\in \G_D(x)} (q_{D-1}(A)_{xy})^2
\ge
\frac{1}{|X|}\sum_{x\in X}
\frac{1}{|X|-|\G_D(x)|}
\left[
\sum_{y\not\in\G_{D}(x)} q_{D-1}(A)_{xy}
\right]^2\\
 & = &
\frac{1}{|X|}\sum_{x\in X}
\frac{1}{|X|-|\G_D(x)|}
\left[
q_{D-1}(k)
\right]^2,
\end{eqnarray*}
and this yields
$$
\frac{|X|}{\sum_{x\in X}\frac{1}{|X|-|\G_D(x)|}}\ge
q_{D-1}(k).
$$
Since $|X|=\sum_{i=0}^d p_i(k)=q_{D-1}(k)+\sum_{i=D}^d p_i(k)$, the inequality follows.

If equality holds, then for each $x$ the values of $q_{D-1}(A)_{xy}$ are the same, say $\alpha$, for all $y\not\in\G_D(x)$. Moreover, since $A$ is symmetric, $q_{D-1}(A)_{xy}=q_{D-1}(A)_{yx}$ for any $x$ and $y\not\in\G_D(x)$, so that $q_{D-1}(A)_{xy}=\alpha$ for any $x$ and $y\not\in\G_D(x)$. Also, 
by Lemma \ref{1e}$(i)$, 
 $\sum_{y\in X} q_{D-1}(A)_{xy}= q_{D-1}(k)=n_{D-1}\alpha$, where $n_{D-1}=\sum_{i=0}^{D-1}|\G_i(x)|$ for any $\in X$. Finally, from  $\|q_{D-1}\|^2=q_{D-1}(k)$, we have
 $$
 \|q_{D-1}\|^2=\frac{1}{n}\trace (q_{D-1}(A)^2)=\frac{1}{n}\alpha^2 \suma \left(\sum_{i=0}^{D-1}A_i\right)=n_{D-1}\alpha^2=n_{D-1}\alpha,
 $$
so that $\alpha=1$.
That is, $q_{D-1}(A)_{xy}=1$ for each pair of vertices $x$ and $y$ at distance less than $D-1$. Consequently, $q_{D-1}(A)=\sum_{i=0}^{D-1}A_i=J-A_D$, which yields $A_D=\sum_{i=D}^d p_i(A)$.

Conversely, assume that $\sum_{i=D}^d p_i(A)=A_D$. Then, $A_D\j=\sum_{i=D}^d p_i(k)\j$, and with $|\G_D(x)|=\sum_{i=D}^d p_i(k)$, the equality follows.
\end{proof}

As a simple consequence, notice that, if $\G$ is a $k$-regular graph of diameter $2$, then $|X|-|\G_2(x)|=1+k$ for any $x\in X$. Besides, $q_1(k)=p_0(k)+p_1(k)=1+k$. Thus, equality in Theorem \ref{th:1f} holds, and $\G$ is distance polynomial, as already proved Weichel in \cite{we82}.
Another consequence of Theorem \ref{th:1f} is the following corollary.

\begin{corollary}
\label{1g}
Let $\G=(X,R)$ be a connected $k$-regular graph on $n$ vertices, with spectrum $\spec \G = $ $\{\lambda_0(=k)^{m(\lambda_0)},\lambda_1^{m(\lambda_1)}\ldots,\lambda_d^{m(\lambda_d)}\}$, diameter $D$, and predistance polynomials $\{p_i\}_{i=0}^d$. Then the following holds.
\begin{itemize}
\item[$(i)$]
In general,
$$
\frac{1}{|X|}\sum_{x\in X}(|X|-|\G_D(x)|)\ge
|X| - \sum_{i=D}^d p_i(k)
$$
with equality if and only if $A_D=\sum_{i=D}^d p_i(A)$.
\item[$(ii)$] If $A_D\in\span\{I,A,\ldots,A^d\}$ then
$$
|\G_D(x)|\le \sum_{i=D}^d p_i(k).
$$
\item[$(iii)$] If $\frac{1}{|X|}\sum_{x\in X}(|X|-|\G_D(x)|)=
|X| - \sum_{i=D}^d p_i(k)$ then $A_D\in\span\{I,A,\ldots,A^d\}$.
	\item[$(iv)$]
The graph $\G$ is distance-regular if and only if $D=d$ and
\begin{equation}
\label{charac-3'}
\frac{|X|}{\sum_{x\in X}\frac{1}{|X|-|\G_d(x)|}}=q_{d-1}(k)=|X|-p_d(k)=
n\left[1-\left(\sum_{i=0}^d \frac{\phi_0^2}{m_i\phi_i^2}\right)^{-1}\right]. 
\end{equation}
or, alternatively,
\begin{equation}
\label{charac-3''}
\frac{1}{|X|}\sum_{x\in X}(|X|-|\G_D(x)|)=p_d(k)=n\left(\sum_{i=0}^d \frac{\phi_0^2}{m_i\phi_i^2}\right)^{-1}. 
\end{equation}
\end{itemize}
\end{corollary}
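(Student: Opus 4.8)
The plan is to deduce all four items from Theorem~\ref{th:1f}, using the arithmetic--harmonic mean inequality to translate between the harmonic mean appearing there and the arithmetic mean appearing in $(i)$, and using Lemma~\ref{1e}$(i)$ to read off $|\G_D(x)|$ whenever $A_D$ is a polynomial in $A$.

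For part $(i)$ I would apply the AM--HM inequality to the positive numbers $a_x=|X|-|\G_D(x)|$, $x\in X$: since
$$
\frac{1}{|X|}\sum_{x\in X}a_x\ \ge\ \frac{|X|}{\sum_{x\in X}1/a_x}\ \ge\ q_{D-1}(k)
$$
by Theorem~\ref{th:1f}, the stated inequality follows, and $q_{D-1}(k)=|X|-\sum_{i=D}^d p_i(k)$ because $|X|=\sum_{i=0}^d p_i(k)$. For the equality case, if the arithmetic mean equals $q_{D-1}(k)$ then the harmonic mean is squeezed between $q_{D-1}(k)$ and $q_{D-1}(k)$, so equality holds in Theorem~\ref{th:1f}, giving $A_D=\sum_{i=D}^d p_i(A)$; conversely, if $A_D=\sum_{i=D}^d p_i(A)$, then applying both sides to $\j$ and using Lemma~\ref{1e}$(i)$ yields $|\G_D(x)|=\sum_{i=D}^d p_i(k)$ for every $x$, so each $a_x$ equals $q_{D-1}(k)$ and the arithmetic mean equals $q_{D-1}(k)$.

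Part $(ii)$ I would obtain as follows: if $A_D=p(A)$ with $\deg p\le d$, then Lemma~\ref{1e}$(i)$ gives $|\G_D(x)|=\sum_{y\in X}(A_D)_{xy}=p(k)$, independent of $x$; feeding this constant value of $|X|-|\G_D(x)|$ into the inequality of part $(i)$ gives $|X|-|\G_D(x)|\ge|X|-\sum_{i=D}^d p_i(k)$, which is the claim. Part $(iii)$ is the equality assertion of part $(i)$ read backwards: the hypothesis forces $A_D=\sum_{i=D}^d p_i(A)$, and each $p_i$ has degree at most $d$, so $A_D\in\span\{I,A,\dots,A^d\}$.

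For part $(iv)$, note that when $D=d$ the sum $\sum_{i=D}^d p_i$ collapses to the single term $p_d$. If $\G$ is distance-regular, then $D=d$ and $p_d(A)=A_d=A_D$, so the equality cases of Theorem~\ref{th:1f} and of part $(i)$ apply, yielding \eqref{charac-3'} and \eqref{charac-3''}; the closed forms follow from $q_{d-1}(k)=|X|-p_d(k)$ and the evaluation $p_d(k)=p_d(\lambda_0)=n\big(\sum_{i=0}^d \phi_0^2/(m_i\phi_i^2)\big)^{-1}$ recorded in \eqref{pd(lambda0)}. Conversely, either displayed equality forces $A_D=p_d(A)$ through those equality cases, i.e.\ $A_d=p_d(A)$ with $D=d$, and Proposition~\ref{charac-1} then gives that $\G$ is distance-regular. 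The only place needing a little care — and the closest thing to an obstacle — is correctly propagating the equality conditions through the AM--HM squeeze in $(i)$ and $(iv)$; everything else is bookkeeping with identities already established in Section~\ref{prelim}.
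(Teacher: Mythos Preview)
Your proposal is correct and follows essentially the same approach as the paper: the AM--HM inequality applied to the numbers $|X|-|\G_D(x)|$ combined with Theorem~\ref{th:1f} for $(i)$, immediate deductions from $(i)$ (together with Lemma~\ref{1e}$(i)$) for $(ii)$ and $(iii)$, and Theorem~\ref{th:1f} with Proposition~\ref{charac-1} and \eqref{pd(lambda0)} for $(iv)$. Your treatment is in fact slightly more detailed than the paper's, particularly in spelling out why $|\G_D(x)|$ is constant when $A_D$ is a polynomial in $A$ (for $(ii)$ and for the converse in $(i)$), but the underlying argument is identical.
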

\begin{proof}
$(i)$ Let $a_1$, $a_2$, \ldots, $a_n$ be real numbers. Recall that the numbers
$$
AM=\frac{a_1+a_2+\cdots+a_n}{n}
\qquad\mbox{ and }\qquad
HM=\frac{n}{
\frac{1}{a_1}+\frac{1}{a_2}+\cdots+\frac{1}{a_n}
}
$$
are the arithmetic and harmonic mean for the numbers $a_1$, $a_2$, \ldots, $a_n$, respectively, and we have $AM\ge HM$. Equalities occur if and only if $a_1=a_2=\cdots=a_n$. The result now follows from Theorem \ref{th:1f}.
The proofs in $(ii)$ and $(iii)$ are immediate from $(i)$, or from Theorem \ref{th:1f}.
The results in $(iv)$ correspond to different  versions of the spectral excess theorem given in \cite{vd08,f02} and \cite{fg97},  respectively. Thus, \eqref{charac-3'} is a consequence of Theorem \ref{th:1f} and Proposition \ref{charac-1}, whereas \eqref{charac-3''} follows from Theorem \ref{th:1f} and $(i)$. In these two cases, we also used $|X|=n$ and \eqref{pd(lambda0)}.
\end{proof}

\section{The Laplacian approach}

\begin{theorem}
\label{1h}
Let $\G=(X,R)$ be a connected graph with $d+1$ distinct eigenvalues, diameter $D$, and Laplacian predistance polynomials $\{r_i\}_{i=0}^d$. Then,
\begin{equation}
\label{1h}
\frac{|X|}{\sum_{x\in X}\frac{1}{|X|-|\G_D(x)|}}\ge s_{D-1}(0)=|X| - \sum_{i=D}^d r_i(0),
\end{equation}
with equality if and only if
$A_D=\sum_{i=D}^d r_i(L)$.
Moreover, in this case, if $D=2$, $\G$ is regular.
\end{theorem}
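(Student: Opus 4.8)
The plan is to transcribe the proof of Theorem~\ref{th:1f} into the Laplacian setting, replacing $A$ by $L$, the ``trivial'' eigenvalue $k$ by $0$, the predistance polynomials $p_i$ by the Laplacian predistance polynomials $r_i$, the partial sums $q_i$ by $s_i$, and Lemma~\ref{1e}$(i)$ by Lemma~\ref{1e}$(ii)$. The one structural observation that makes the transcription legitimate is that, since $L_{xy}=0$ whenever $x\neq y$ and $x\not\sim y$, a polynomial of degree $\le h$ evaluated at $L$ has $(x,y)$-entry zero whenever $d(x,y)>h$; in particular $s_{D-1}(L)_{xy}=0$ for every $y\in\G_D(x)$, exactly as for $q_{D-1}(A)$ in Theorem~\ref{th:1f}.

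First I would write, using the normalization $\|s_{D-1}\|_L^2=s_{D-1}(0)$ (from $\|r_i\|_L^2=r_i(0)$ and orthogonality) and the symmetry of $s_{D-1}(L)$,
$$
s_{D-1}(0)=\frac{1}{|X|}\trace(s_{D-1}(L)^2)=\frac{1}{|X|}\sum_{x\in X}\sum_{y\not\in\G_D(x)}(s_{D-1}(L)_{xy})^2.
$$
Applying \eqref{1d} with $C=s_{D-1}(L)$, and then Lemma~\ref{1e}$(ii)$ — which gives $\sum_{y\not\in\G_D(x)}s_{D-1}(L)_{xy}=\sum_{y\in X}s_{D-1}(L)_{xy}=s_{D-1}(0)$ — yields $s_{D-1}(0)\ge \frac{s_{D-1}(0)^2}{|X|}\sum_{x\in X}\frac{1}{|X|-|\G_D(x)|}$, which is the inequality \eqref{1h} after dividing by $s_{D-1}(0)>0$. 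The identity $s_{D-1}(0)=|X|-\sum_{i=D}^d r_i(0)$ follows from $s_d(L)=H_L(L)=J$ together with Lemma~\ref{1e}$(ii)$, which give $s_d(0)=|X|$.

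For the equality case, equality in \eqref{1d} forces $s_{D-1}(L)_{xy}$ to take a constant value $\alpha_x$ over all $y\not\in\G_D(x)$; symmetry of $s_{D-1}(L)$ together with connectedness — any edge joins vertices at distance $1\le D-1$ when $D\ge 2$, while the case $D=1$ is the complete graph, where the conclusion is immediate — upgrades this to a single global constant $\alpha$. Then Lemma~\ref{1e}$(ii)$ gives $s_{D-1}(0)=\alpha(|X|-|\G_D(x)|)$, so $|X|-|\G_D(x)|$ is independent of $x$; call it $n_{D-1}$. Computing $\|s_{D-1}\|_L^2$ directly gives $s_{D-1}(0)=\alpha^2 n_{D-1}$, and comparing with $s_{D-1}(0)=\alpha n_{D-1}$ forces $\alpha=1$ (note $s_{D-1}(0)\ge s_0(0)=1>0$). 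Hence $s_{D-1}(L)=\sum_{i=0}^{D-1}A_i=J-A_D$, and subtracting this from $s_d(L)=J$ gives $A_D=\sum_{i=D}^d r_i(L)$. The converse is immediate: if $A_D=\sum_{i=D}^d r_i(L)$ then $A_D\j=\big(\sum_{i=D}^d r_i(0)\big)\j$ by Lemma~\ref{1e}$(ii)$, so $|\G_D(x)|=\sum_{i=D}^d r_i(0)$ for every $x$, and both sides of \eqref{1h} collapse to $s_{D-1}(0)$.

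Finally, for the ``moreover'' clause: if equality holds and $D=2$, then $A_2=\sum_{i=2}^d r_i(L)=s_d(L)-r_0(L)-r_1(L)=J-I-r_1(L)$; on the other hand $J=I+A+A_2$, so $r_1(L)=A$, and Lemma~\ref{r1-reg} then tells us $\G$ is regular. I expect the only genuinely delicate point to be the equality-case bookkeeping — promoting the per-vertex constants $\alpha_x$ to a single $\alpha$ via symmetry and connectedness, and then pinning down $\alpha=1$ — since the remainder is a faithful copy of the argument for Theorem~\ref{th:1f}.
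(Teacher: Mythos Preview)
Your proof is correct and follows essentially the same route as the paper's own argument: the same trace expansion of $\|s_{D-1}\|_L^2$, the same application of \eqref{1d} and Lemma~\ref{1e}$(ii)$, the same $\alpha=1$ bookkeeping in the equality case, and the same use of Lemma~\ref{r1-reg} for the $D=2$ clause. If anything, you are slightly more explicit than the paper at a couple of points (the observation that $s_{D-1}(L)_{xy}=0$ for $y\in\G_D(x)$, the connectedness step that promotes the $\alpha_x$ to a global $\alpha$, and the separate handling of $D=1$), but the overall strategy is identical.
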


\noindent
\begin{proof}
The proof follows the same line of reasoning that in Theorem \ref{th:1f} with the polynomial $s_{D-1}$ instead of $q_{D-1}$. Thus, we have:
	\begin{eqnarray*}
		s_{D-1}(0) & = & \| s_{D-1}\|_L^2=\frac{1}{|X|}\trace( s_{D-1}(L)^2)
		=\frac{1}{|X|}\sum_{x\in X} (s_{D-1}(L)^2)_{xx}\\
		& = &
		\frac{1}{|X|}\sum_{x\in X}\sum_{y\not\in \G_D(x)} (s_{D-1}(L)_{xy})^2
		\ge
		\frac{1}{|X|}\sum_{x\in X}
		\frac{1}{|X|-|\G_D(x)|}
		\left[
		\sum_{y\not\in\G_{D}(x)}s_{D-1}(L)_{xy}
		\right]^2\\
		& = &
		\frac{1}{|X|}\sum_{x\in X}
		\frac{1}{|X|-|\G_D(x)|}
		\left[
		s_{D-1}(0)
		\right]^2,
	\end{eqnarray*}
	and this yields
	$$
	\frac{|X|}{\sum_{x\in X}\frac{1}{|X|-|\G_D(x)|}}\ge
	s_{D-1}(0)=|X|-\sum_{i=D}^d r_i(0).
	$$
	If equality holds, then for each $x$ the values of $s_{D-1}(L)_{xy}$ are the same, say $\alpha$, for all $y\not\in\G_D(x)$. Moreover, since $L$ is symmetric, $s_{D-1}(L)_{xy}=s_{D-1}(L)_{yx}$ for any $x$ and $y\not\in\G_D(x)$.
	Also, by Lemma \ref{1e}$(ii)$,
	$\sum_{y\in X} s_{D-1}(L)_{xy}=s_{D-1}(0)=n_{D-1}\alpha$, where $n_{D-1}=\sum_{i=0}^{D-1}|\G_i(x)|$ for every $x\in X$. Finally, from  $\|s_{D-1}\|^2=s_{D-1}(0)$, we have
	$$
	\|s_{D-1}\|_L^2=\frac{1}{n}\trace ( s_{D-1}(L)^2)=\frac{1}{n}\alpha^2 \suma \left(\sum_{i=0}^{D-1}A_{i}\right)=n_{D-1}\alpha^2=n_{D-1}\alpha,
	$$
	so that $\alpha=1$.
	That is, $s_{D-1}(L)_{xy}=1$ for each pair of vertices $x$ and $y$ at distance less than $D-1$. Consequently, $s_{D-1}(L)=J-A_D$, which yields $A_D=\sum_{i=D}^d r_i(L)$.
	In particular, if equality holds and $D=2$, we have $I+A=s_1(L)=r_0(L)+r_1(L)=I+r_1(L)$. Thus, $r_1(L)=A$ and, by Lemma \ref{r1-reg}, $\G$ is regular.
	
	Conversely, assume that $\sum_{i=D}^d r_i(L)=A_D$. Then, $A_D\j=\sum_{i=D}^d r_i(0)\j$, and with $|\G_D(x)|=\sum_{i=D}^d r_i(k)$, and the equality follows.
\end{proof}

From this theorem, we obtain the analogous results of Corollary \ref{1g}$(i)$-$(iv)$. In particular, the analogous of $(iv)$ yields the following characterization of distance-regularity for a (not necessarily regular) graph. 

\begin{corollary}
	\label{1i}
	Let $\G=(X,R)$ be a graph on $n$ vertices, with Laplacian matrix $L$,   Laplacian spectrum $\spec L=\{\theta_0(=0)^{m(\theta_0)},\theta_1^{m(\theta_1)},\ldots, \theta_d^{m(\theta_d)}\}$, diameter $D$, and Laplacian predistance polynomials $\{r_i\}_{i=0}^d$. Then,
	$\G$ is distance-regular if and only if $D=d$ and
		\begin{equation}
		\label{charac-3}
		\frac{|X|}{\sum_{x\in X}\frac{1}{|X|-|\G_d(x)|}}=s_{d-1}(0)=|X|-r_d(0)=n\left[1-
	\left(\sum_{i=0}^d \frac{\psi_0^2}{m(\theta_i)\psi_i^2}\right)^{-1}\right]. 
		\end{equation}
\end{corollary}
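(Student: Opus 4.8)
The plan is to derive Corollary~\ref{1i} from Theorem~\ref{1h} in exactly the same way that the distance-regularity characterizations of Corollary~\ref{1g}$(iv)$ were derived from Theorem~\ref{th:1f}, i.e.\ by combining the equality case of the inequality \eqref{1h} with the Laplacian version of Proposition~\ref{charac-1}, namely Proposition~\ref{charac-2}. So first I would specialize \eqref{1h} to the case $D=d$. Theorem~\ref{1h} then says that the harmonic mean of the numbers $(|X|-|\G_d(x)|)_{x\in X}$ is at least $s_{d-1}(0)=|X|-r_d(0)$, with equality if and only if $A_d=r_d(L)$ (the sum $\sum_{i=d}^d r_i(L)$ collapses to the single term $r_d(L)$ since $D=d$).

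Next I would invoke Proposition~\ref{charac-2}: a graph with $d+1$ distinct Laplacian eigenvalues and diameter $D$ is distance-regular if and only if $D=d$ and $r_d(L)=A_d$. Chaining this with the equality case of Theorem~\ref{1h}, we get that $\G$ is distance-regular if and only if $D=d$ and equality holds in \eqref{1h}, which is precisely the equation
\[
\frac{|X|}{\sum_{x\in X}\frac{1}{|X|-|\G_d(x)|}}=s_{d-1}(0)=|X|-r_d(0).
\]
Finally, to obtain the last, fully spectral expression in \eqref{charac-3}, I would substitute the closed formula \eqref{rd(0)} for $r_d(0)$ in terms of the Laplacian spectrum, together with $|X|=n$; this turns $|X|-r_d(0)$ into $n\bigl[1-(\sum_{i=0}^d \psi_0^2/(m(\theta_i)\psi_i^2))^{-1}\bigr]$, completing the string of equalities.

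There is essentially no obstacle here: the corollary is a direct packaging of Theorem~\ref{1h} (equality case with $D=d$) plus the already-cited Proposition~\ref{charac-2} plus the bookkeeping identity \eqref{rd(0)}. The only point that deserves a line of care is making sure the ``$d+1$ distinct eigenvalues'' hypothesis in Proposition~\ref{charac-2} is read as ``$d+1$ distinct Laplacian eigenvalues'' (consistent with the Laplacian spectrum stated in the corollary), so that the indexing of $r_d$ and $\psi_i$ matches; once that is noted the proof is a two-step citation. I would therefore write the proof as: ``This is the Laplacian analogue of Corollary~\ref{1g}$(iv)$. By Theorem~\ref{1h} with $D=d$, equality in \eqref{1h} holds if and only if $A_d=r_d(L)$, which by Proposition~\ref{charac-2} is equivalent to $\G$ being distance-regular with $D=d$. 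The final expression follows from \eqref{rd(0)} and $|X|=n$.''
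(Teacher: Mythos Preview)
Your proposal is correct and follows exactly the same approach as the paper's proof, which simply cites Theorem~\ref{1h} (the paper writes Theorem~\ref{th:1f}, evidently a typo for the Laplacian version), Proposition~\ref{charac-2}, and \eqref{rd(0)}. Your write-up is in fact more careful than the paper's one-line justification, making explicit the specialization $D=d$ and the collapse of the sum to the single term $r_d(L)$.
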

\begin{proof}
Use Theorem \ref{th:1f}, Proposition \ref{charac-2}, and \eqref{rd(0)}.
	\end{proof}

Also, as in Corollary \ref{1g}$(iv)$, the above result implies the characterization given in \cite{vf14} by using the arithmetic mean of the numbers $|X|-|\G_d(x)|$.)

\section{Open problems}

We finish the paper by formulating some open problems which could be of interest in further studies.

\begin{researchProblem}
According to Corollary \ref{1g}$(iv)$,  if $d=D$ and equality in \gzit{1f} holds, then $\G$ is distance-regular. Classify all graphs for which equality in \gzit{1f} holds.
\end{researchProblem}

\begin{researchProblem}
Let $\G=(X,R)$ be a walk-regular graph (that is, for each $\ell\ge 0$, the number of closed walks of length $\ell$ from a vertex $x$ to itself is the same for each $x$) with diameter $D$ and $d+1$ distinct eigenvalues. Assume that $D<d$. Prove or disprove that
$$
|\G_D(x)|\le p_d(k)
\qquad \forall x\in X.
$$
More generally, prove or disprove the same when $\G$ is regular.
\end{researchProblem}


\begin{researchProblem}
Let $\G=(X,R)$ be a graph with diameter $D$, adjacency matrix $A$, and $d+1$ distinct eigenvalues. Let $f_0,f_1,\ldots,f_d$ be linearly independent polynomials satisfying $\sum_{i=0}^d f_i(A)=J$, where $f_i$, for $i=0,\ldots,d$, does not need to be of degree $i$. Find under what conditions on such polynomials we can obtain a version of the spectral excess theorem for quotient polynomial graphs. (For a definition of quotient polynomial graphs, see \cite{f16}).
\end{researchProblem}

\newpage


\end{document}